\documentclass{amsart}
\usepackage{graphicx}
\usepackage{hyperref}
\usepackage{amssymb}
\usepackage{amsthm}
\usepackage{amsmath}

\newtheorem{theorem}{Theorem}[section]

\newtheorem{corollary}[theorem]{Corollary}
\newtheorem{proposition}[theorem]{Proposition}
\theoremstyle{definition}

\newtheorem{example}[theorem]{Example}

\theoremstyle{remark}
\newtheorem{remark}[theorem]{Remark}

\numberwithin{equation}{section}

\def\func#1{\mathop{\rm #1}\nolimits}

\begin{document}
\title{Neighborhoods of univalent functions}
\author{Mihai N. Pascu}
\address{Faculty of Mathematics and Computer Science, Transilvania University of Bra\c{s}ov, Bra\c{s}ov -- 500091, ROMANIA}
\email{mihai.pascu@unitbv.ro}
\thanks{The authors kindly acknowledge the support from CNCSIS research grant PNII - IDEI 209.}

\author{Nicolae R. Pascu}
\address{Department of Mathematics, Southern Polytechnic State University,
1100 S. Marietta Pkwy, Marietta, GA 30060-2896, USA.}
\email{npascu@spsu.edu}

\subjclass[2000]{Primary 30C55, 30C45, 30E10} %
\keywords{univalent function, Noshiro-Warschawski-Wolff univalence
criterion, neighborhoods of univalent functions.} %
\maketitle

\begin{abstract}
The main result shows a small perturbation of a univalent function
is again a univalent function, hence a univalent function has a
neighborhood consisting entirely of univalent functions.

For the particular choice of a linear function in the hypothesis of
the main theorem, we obtain a corollary which is equivalent to the
classical Noshiro-Warschawski-Wolff univalence criterion.

We also present an application of the main result in terms of Taylor
series, and we show that the hypothesis of our main result is sharp.
\end{abstract}

\section{Introduction}

We denote by $U_{r}=\left\{ z\in \mathbb{C}:\left\vert z\right\vert
<r\right\} $ the open disk of radius $r>0$ centered at the origin and we let
$U=U_{1}$. The class of functions $f:D\rightarrow \mathbb{C}$ analytic in
the domain $D$ will be denoted by $\mathcal{A}\left( D\right) $.

It is known that if $f:D\rightarrow \mathbb{C}$ is a univalent map in a
domain $D$, then $f^{\prime }\neq 0$ in $D$. The non-vanishing of the
derivative of an analytic function (local univalence) is not in general
sufficient to insure the univalence of the function, as it can be seen by
considering for example the exponential function $f\left( z\right) =e^{z}$
defined in the upper half-plane.

The classical Noshiro-Warschawski-Wolff univalence criterion gives a partial
converse of the above result, as follows:

\begin{theorem}
If $f:D\rightarrow \mathbb{C}$ is analytic in the convex domain $D$
and
\begin{equation*}
\func{Re}f^{\prime }\left( z\right) >0,\qquad z\in D,
\end{equation*}%
then $f$ is univalent in $D$.
\end{theorem}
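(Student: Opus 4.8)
The plan is to prove injectivity directly: given two distinct points $z_{1},z_{2}\in D$, I want to show $f(z_{1})\neq f(z_{2})$. The key observation is that convexity of $D$ guarantees that the entire segment joining $z_{1}$ and $z_{2}$ stays inside $D$, so the hypothesis $\func{Re}f^{\prime }>0$ is available at every point of that segment. This is precisely the place where convexity will be indispensable.

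First I would parametrize the segment by $z(t)=z_{1}+t\left( z_{2}-z_{1}\right) $ for $t\in \left[ 0,1\right] $, which lies in $D$ by convexity. Applying the fundamental theorem of calculus to the map $t\mapsto f(z(t))$ together with the chain rule gives
\begin{equation*}
f(z_{2})-f(z_{1})=\int_{0}^{1}f^{\prime }(z(t))\left( z_{2}-z_{1}\right) \,dt=\left( z_{2}-z_{1}\right) \int_{0}^{1}f^{\prime }(z(t))\,dt.
\end{equation*}
Dividing by the nonzero quantity $z_{2}-z_{1}$ then exhibits the difference quotient as an average of $f^{\prime }$ along the segment, which is the structural heart of the argument.

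The decisive step is to take real parts. Since $\func{Re}$ is real-linear and commutes with integration,
\begin{equation*}
\func{Re}\frac{f(z_{2})-f(z_{1})}{z_{2}-z_{1}}=\int_{0}^{1}\func{Re}f^{\prime }(z(t))\,dt,
\end{equation*}
and the integrand is strictly positive throughout $\left[ 0,1\right] $ by hypothesis, so the right-hand side is strictly positive. In particular the difference quotient is nonzero, which forces $f(z_{2})\neq f(z_{1})$. As $z_{1},z_{2}$ were arbitrary distinct points, $f$ is univalent in $D$.

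I expect that the only genuine subtlety is the justification of the integral representation and of interchanging $\func{Re}$ with the integral; both are routine once convexity secures that $z(t)\in D$ for all $t\in \left[ 0,1\right] $. Without convexity the joining segment could leave $D$, and the positivity of $\func{Re}f^{\prime }$ along it would no longer be guaranteed, so the estimate would break down — which underscores why the convexity hypothesis cannot simply be dropped.
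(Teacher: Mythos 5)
Your proof is correct: the integral representation $\frac{f(z_{2})-f(z_{1})}{z_{2}-z_{1}}=\int_{0}^{1}f^{\prime }\left( z_{1}+t\left( z_{2}-z_{1}\right) \right) \,dt$, the interchange of $\func{Re}$ with the integral, and the strict positivity of the integral of a positive continuous function on $\left[ 0,1\right] $ are all legitimate, and convexity is invoked exactly where it is needed. It is, however, a genuinely different route from the paper's. The paper states this theorem as a known classical result and never proves it directly; the only derivation it offers appears in the remark following Corollary \ref{corollary of main theorem 2}, where the criterion is recovered from the paper's own machinery: fix $r\in \left( 0,1\right) $, use compactness to find $c>0$ with $f^{\prime }\left( D_{r}\right) \subset \left\{ w\in \mathbb{C}:\left\vert w-c\right\vert <c\right\} $, apply Corollary \ref{corollary of main theorem 2} (the case of the linear comparison function $g\left( z\right) =cz$, for which $K\left( g,D\right) =c$), and let $r\rightarrow 1$. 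That chain ultimately rests on the proof of Theorem \ref{main theorem}, which is a contradiction argument running through the maximum modulus principle applied to $1/G$. Your direct argument is shorter and entirely elementary --- no contradiction, no maximum principle --- and is the classical proof of this criterion; what the paper's longer route buys is the structural insight that the criterion is exactly the special case of a perturbation theorem in which the univalent comparison function $g$ is linear. It is also worth noting that your central device --- integrating the derivative along the segment $\left[ z_{1},z_{2}\right] $ and estimating the result --- is precisely the computation the paper performs (for $f^{\prime }-g^{\prime }$) inside the proof of Theorem \ref{main theorem}, so the two approaches share their key step even though the surrounding logic differs.
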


In the present paper we introduce the constant $K\left( f,D\right) $
associated with a function $f:D\rightarrow \mathbb{C}$ analytic in a
domain $D$, which is a measure of the "degree of univalence" of $f$
(see Proposition \ref{characterization of univalence} and the remark
following it).

Using the constant $K\left( f,D\right) $ thus introduced, in Theorem
\ref{main theorem} we obtain a sufficient condition for univalence,
which shows that a small perturbation of a univalent function is
again univalent. As a theoretical consequence of this result, it
follows that a univalent function has a neighborhood consisting
entirely of univalent functions (see Remark \ref{Nbds of univalent
functions}).

The Theorem \ref{main theorem} is sharp, in the sense that we cannot
replace the upper bound appearing in the hypothesis of this theorem
by a larger one, as shown in Example \ref{Exemplul 1}.

For the particular choice of a linear function in Theorem \ref{main
theorem}, we obtain a simple sufficient condition for univalence
(Corollary \ref{corollary of main theorem 2}), which is shown to be
equivalent to the Noshiro-Warschawski-Wolff univalence criterion.
The main result in Theorem \ref{main theorem} can be viewed
therefore as a generalization of this classical result, in which the
linear function is replaced by a general univalent function.

The paper concludes with another application of the main result in
the case of analytic functions defined in the unit disk. Thus, in
Theorem \ref{Application to Taylor series} and the corollary
following it, we obtain sufficient conditions for the univalence of
an analytic function defined in the unit disk in terms of the
coefficients of its Taylor series representation, which might be of
independent interest.

\section{Main results}

Given a function $f:D\rightarrow \mathbb{C}$ analytic in the domain $D$ we
introduce the constant $K\left( f,D\right) $ defined as follows:

\begin{equation}
K\left( f,D\right) =\inf_{\substack{ a,b\in D  \\ a\neq b}}\left\vert \frac{%
f\left( a\right) -f\left( b\right) }{a-b}\right\vert
\end{equation}

Note that from the definition follows immediately that if the function $f$
is not univalent in $D$ then $K\left( f,D\right) =0$ The constant $K\left(
f,D\right) $ characterizes the univalence of the function $f$ in $D$  in the
following sense:

\begin{proposition}
\label{characterization of univalence}Let $f:D\rightarrow
\mathbb{C}$ be an analytic function in the domain $D$. If $K\left(
f,D\right) >0$ then $f$ is univalent in $D$.

Conversely, if $f$ is univalent in $D$ and $\Omega \subset \overline{\Omega }%
\subset D$ is a domain strictly contained in $D$, then $K\left( f,\Omega
\right) >0$.
\end{proposition}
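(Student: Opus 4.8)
The plan is to treat the two implications separately, since the forward direction is essentially immediate while the converse carries all the content.

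For the first assertion I would argue directly from the definition. Suppose $K\left( f,D\right) >0$ and take any two distinct points $a,b\in D$. By the definition of $K\left( f,D\right) $ as an infimum we have $\left\vert \frac{f\left( a\right) -f\left( b\right) }{a-b}\right\vert \geq K\left( f,D\right) >0$, whence $f\left( a\right) \neq f\left( b\right) $. Since $a,b$ were arbitrary distinct points, $f$ is injective, i.e.\ univalent in $D$.

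For the converse, the central idea is to realize $K\left( f,\Omega \right) $ as the minimum of a continuous function on a compact set and to show that this function never vanishes. Define the divided difference
\[
F\left( a,b\right) =
\begin{cases}
\dfrac{f\left( a\right) -f\left( b\right) }{a-b}, & a\neq b, \\[2mm]
f^{\prime }\left( a\right) , & a=b,
\end{cases}
\]
for $\left( a,b\right) \in D\times D$. The key point is that $F$ is continuous on $D\times D$, including across the diagonal $a=b$. Since continuity is a local property, I would verify it on products $B\times B$ of small open disks $B\subset D$: on such a convex $B$ the representation $F\left( a,b\right) =\int_{0}^{1}f^{\prime }\bigl(b+t\left( a-b\right) \bigr)\,dt$ is valid and manifestly continuous in $\left( a,b\right) $, and it returns the value $f^{\prime }\left( a\right) $ on the diagonal, matching the definition.

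Now restrict $F$ to the compact set $\overline{\Omega }\times \overline{\Omega }\subset D\times D$. I claim that $\left\vert F\right\vert $ attains a strictly positive minimum there. Off the diagonal, $F\left( a,b\right) =0$ would force $f\left( a\right) =f\left( b\right) $ with $a\neq b$, contradicting the univalence of $f$ on $D\supset \overline{\Omega }$; on the diagonal, $F\left( a,a\right) =f^{\prime }\left( a\right) \neq 0$, since a univalent function has non-vanishing derivative, as recalled in the Introduction. Hence $\left\vert F\right\vert $ is a continuous, nowhere-vanishing function on a compact set, so $m:=\min_{\overline{\Omega }\times \overline{\Omega }}\left\vert F\right\vert >0$. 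Finally, since the pairs $\left( a,b\right) $ with $a,b\in \Omega $ and $a\neq b$ form a subset of $\overline{\Omega }\times \overline{\Omega }$, we conclude $K\left( f,\Omega \right) =\inf \left\vert F\right\vert \geq m>0$. The only delicate step is the continuity of $F$ across the diagonal; I would handle this localization carefully, observing that one needs only convex neighborhoods (small disks) inside $D$ for the integral representation, so that no global convexity of $\Omega $ or $D$ is required, while everything else reduces to a routine compactness argument.
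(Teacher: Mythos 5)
Your proof is correct and follows essentially the same route as the paper: the forward direction directly from the definition of the infimum, and the converse by extending the divided difference $F$ across the diagonal via $f^{\prime}$, then using univalence, non-vanishing of $f^{\prime}$, and compactness of $\overline{\Omega}\times\overline{\Omega}$ to get a positive minimum of $\left\vert F\right\vert$. Your explicit verification of continuity across the diagonal via the local integral representation $F\left(a,b\right)=\int_{0}^{1}f^{\prime}\bigl(b+t\left(a-b\right)\bigr)\,dt$ is a detail the paper leaves implicit, and is a welcome addition rather than a deviation.
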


\begin{proof}
The first statement follows from the inequality
\begin{equation*}
\left\vert f\left( a\right) -f\left( b\right) \right\vert \geq \left\vert
a-b\right\vert K\left( f,D\right) >0,
\end{equation*}%
for any distinct points $a,b\in D$.

To prove the converse, note that%
\begin{equation*}
K\left( f,\Omega \right) \geq \inf_{a,b\in \overline{\Omega }}\left\vert
F\left( a,b\right) \right\vert ,
\end{equation*}%
where $F:\overline{\Omega }\times \overline{\Omega }\rightarrow \mathbb{C}$
is the function defined by%
\begin{equation*}
F\left( a,b\right) =\left\{
\begin{array}{l}
\frac{f\left( a\right) -f\left( b\right) }{a-b},\qquad a\neq b \\
f^{\prime }\left( a\right) ,\qquad \quad a=b%
\end{array}%
\right. .
\end{equation*}

Note that since $f:D\rightarrow \mathbb{C}$ is analytic, $F$ is continuous
on the closed set $\overline{\Omega }\times \overline{\Omega }$, and
therefore $F$ attains its minimum modulus on this set:%
\begin{equation*}
K\left( f,\Omega \right) \geq \inf_{a,b\in \overline{\Omega }}\left\vert
F\left( a,b\right) \right\vert =\left\vert F\left( \alpha ,\beta \right)
\right\vert ,
\end{equation*}%
for some $\alpha ,\beta \in \overline{\Omega }$.

If $\alpha \neq \beta $, then $\left\vert F\left( \alpha ,\beta \right)
\right\vert =\left\vert \frac{f\left( \alpha \right) -f\left( \beta \right)
}{\alpha -\beta }\right\vert >0$ since $\alpha ,\beta \in \overline{\Omega }%
\subset D$ and $f$ is univalent in $D$, and if $\alpha =\beta $ then $%
\left\vert F\left( \alpha ,\alpha \right) \right\vert =\left\vert f^{\prime
}\left( \alpha \right) \right\vert >0$, again by the univalence of $f$ in $D$%
. It follows that in all cases we have%
\begin{equation*}
K\left( f,\Omega \right) \geq \left\vert F\left( \alpha ,\beta \right)
\right\vert >0,
\end{equation*}%
concluding the proof.
\end{proof}

\begin{remark}
\label{K might be 0}Note that the converse in the above proposition may not
hold for $\Omega =D$ without the additional hypothesis, as shown in the
example below.

In order to have the equivalence%
\begin{equation*}
f\text{ univalent in }D\Longleftrightarrow \text{ }K\left( f,D\right) >0,
\end{equation*}%
one needs additional hypotheses, which guarantee the existence of a
continuous extension of $f,f^{\prime }$ to $\overline{D}$, such that
$f$ is injective on $\overline{D}$ and $f^{\prime }\neq 0$ in
$\overline{D}$.

For example, in the case $D=U$, if the boundary of the image domain $f\left(
U\right) $ is a Jordan curve of class $C^{1,\alpha }$ $\left( 0<\alpha
<1\right) $, by Carath\'{e}odory theorem the function $f$ has a continuous
injective extension to $\overline{D}$, and also, by Kelogg-Warschawski
theorem, the function $f^{\prime }$ has continuous extension to $\overline{D}
$, with $f^{\prime }\neq 0$ in $\overline{D}$ (see for example \cite%
{Pommerenke}, p. 24 and pp. 48 -- 49). Following the proof above with $%
\Omega $ replaced by $U$, we obtain $K\left( f,U\right) >0$, and therefore
in this case we have
\begin{equation*}
f\text{ univalent in }U\Longleftrightarrow \text{ }K\left( f,U\right) >0.
\end{equation*}
\end{remark}

\begin{example}
Let $D=U-\left[ 0,1\right] $ be the unit disk with a slit along the positive
real axis. Since $D$ is simply connected, there exists a conformal map $%
f:U\rightarrow D$ between the unit disk $U$ and $D$ (see Figure \ref{Figura
1} below). The map $f$ has a continuous extension to $\overline{U}$, and
without loss of generality we may assume that there exists $\theta \in
(0,2\pi )$ such that $f\left( e^{i\theta }\right) =f\left( e^{-i\theta
}\right) \in \left( 0,1\right) $.

The function $f$ is univalent in $U$, but $K\left( f,U\right) =0$ since
\begin{equation*}
K\left( f,U\right) \leq \lim_{\substack{ a\rightarrow e^{i\theta }  \\ %
b\rightarrow e^{-i\theta }}}\left\vert \frac{f\left( a\right) -f\left(
b\right) }{a-b}\right\vert =\left\vert \frac{f\left( e^{i\theta }\right)
-f\left( e^{-i\theta }\right) }{e^{i\theta }-e^{-i\theta }}\right\vert =0.
\end{equation*}
\end{example}

\begin{figure}[thb]
\begin{center}
\includegraphics[scale=0.6]{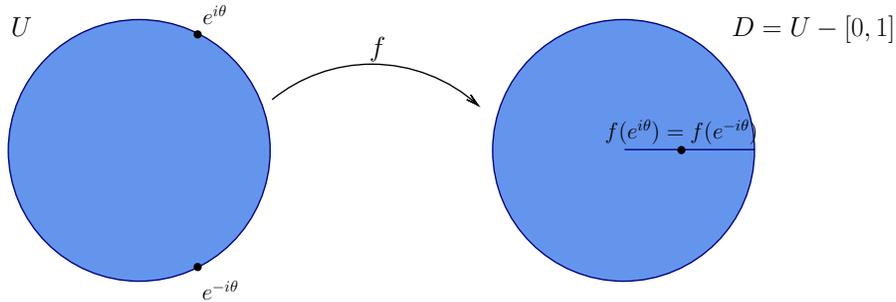}
\caption{An example of a univalent function in $U$ for which
$K\left( f,U\right) =0$.}
\label{Figura 1}
\end{center}
\end{figure}

The main result is contained in the following:

\begin{theorem}
\label{main theorem}Let $f:D\rightarrow \mathbb{C}$ be a non-constant
analytic function in the convex domain $D$. If there exists an analytic
function $g:D\rightarrow \mathbb{C}$ univalent in $D$ such that%
\begin{equation}
\left\vert f^{\prime }\left( z\right) -g^{\prime }\left( z\right)
\right\vert \leq K\left( g,D\right) ,\qquad z\in D,
\label{sufficient condition for univalency}
\end{equation}%
then the function $f$ is also univalent in $D$.
\end{theorem}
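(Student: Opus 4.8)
The plan is to argue by contradiction. Suppose $f$ is not univalent, so that $f(a_0)=f(b_0)$ for some distinct $a_0,b_0\in D$. Since $D$ is convex, the segment $z(t)=b_0+t(a_0-b_0)$, $t\in[0,1]$, lies in $D$, and the fundamental theorem of calculus along this segment gives $\frac{f(a_0)-f(b_0)}{a_0-b_0}=\int_0^1 f'(z(t))\,dt$, and likewise for $g$. Writing $\phi=f'-g'$ (analytic on $D$, with $|\phi|\le K:=K(g,D)$ by the hypothesis) and $\Phi_g=\frac{g(a_0)-g(b_0)}{a_0-b_0}$, the relation $f(a_0)=f(b_0)$ becomes $\Phi_g+\int_0^1\phi(z(t))\,dt=0$. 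Combining the definition of $K$ (which gives $|\Phi_g|\ge K$ since $a_0\neq b_0$) with the triangle inequality and the hypothesis (which give $\bigl|\int_0^1\phi(z(t))\,dt\bigr|\le\int_0^1|\phi(z(t))|\,dt\le K$), I would obtain the chain $K\le|\Phi_g|=\bigl|\int_0^1\phi(z(t))\,dt\bigr|\le K$.

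The main obstacle is that this direct estimate only yields $\bigl|\frac{f(a_0)-f(b_0)}{a_0-b_0}\bigr|\ge0$, which is vacuous; the entire argument therefore rests on extracting information from the fact that \emph{every} inequality in the chain above must in fact be an equality. Equality in $\int_0^1|\phi(z(t))|\,dt=K$ together with $|\phi|\le K$ forces $|\phi(z(t))|=K$ for all $t$ (by continuity), while equality in the triangle inequality for the integral forces $\phi(z(t))$ to have constant argument along the segment; together these give $\phi(z(t))\equiv c'$ for a constant $c'$ with $|c'|=K$, hence $\phi\equiv c'$ on all of $D$ by the identity theorem. Reading this back into the integral relation yields $\Phi_g=-c'$, so that $|\Phi_g|=K$: the infimum defining $K(g,D)$ is actually \emph{attained} at the pair $(a_0,b_0)$.

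To close the argument I would then exploit this attained infimum. The difference quotient $\Psi(a)=\frac{g(a)-g(b_0)}{a-b_0}$ extends to an analytic function on $D$ (with $\Psi(b_0)=g'(b_0)$); it is non-vanishing, since $g$ is univalent and $|\Psi|\ge K$, and by the previous step $|\Psi(a_0)|=K$ is its minimum modulus, attained at the interior point $a_0$. The minimum modulus principle then forces $\Psi$ to be constant, necessarily equal to $\Phi_g=-c'$, so that $g(a)=g(b_0)-c'(a-b_0)$ is affine and $g'\equiv-c'$ on $D$. Combined with $f'-g'\equiv c'$ this gives $f'\equiv0$, i.e.\ $f$ is constant, contradicting the hypothesis. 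This is exactly where the non-constant assumption is indispensable: it is the precise ingredient that eliminates the degenerate boundary configuration produced by the equality case. (The degenerate possibility $K=0$ needs no separate treatment, since then $\Phi_g=-c'=0$ would already contradict the univalence of $g$.)
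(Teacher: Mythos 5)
Your proof is correct, and it shares the paper's central mechanism: assume $f(a_0)=f(b_0)$ with $a_0\neq b_0$, integrate $f'-g'$ along the segment to conclude that the infimum defining $K(g,D)$ is attained at the pair $(a_0,b_0)$, and then apply the minimum modulus principle to the analytically extended difference quotient (your $\Psi$, the paper's $G$) to force $g$ to be affine. The endgames, however, differ genuinely. The paper never analyzes the equality case of the integral inequality: once it has $g(z)=g(z_2)+c(z-z_2)$ with $K(g,D)=|c|$, it rewrites the hypothesis as $|f'(z)-c|\le |c|$, uses the maximum modulus principle to split into two cases (either $f$ is linear, hence univalent because it is non-constant, or the strict inequality $|f'(z)-c|<|c|$ holds throughout $D$), and in the strict case runs the integration argument a second time to reach a strict-inequality contradiction. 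You instead extract everything from the equality chain at the start: equality in the triangle inequality together with the pointwise bound $|\phi|\le K$ forces $\phi=f'-g'$ to be a constant $c'$ of modulus $K$ along the segment, hence on all of $D$ by the identity theorem; combined with $\Psi\equiv -c'$, which gives $g'\equiv -c'$, this yields $f'\equiv 0$, contradicting non-constancy directly. Your route buys a cleaner finish (no case split, no second integration) and makes transparent exactly which degenerate configuration the non-constancy hypothesis excludes, namely constant $f$ paired with affine $g$; the paper's route avoids the equality-case analysis of integrals at the price of that extra pass. Your parenthetical disposal of the case $K=0$ (where $\Phi_g=0$ would contradict the univalence of $g$) is also sound.
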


\begin{proof}
Assuming that $f$ is not univalent in $D$, there exists distinct points $%
z_{1,2}\in D$ such that $f\left( z_{1}\right) =f\left( z_{2}\right) $.
Integrating the derivative of $f-g$ along the line segment $\left[
z_{1},z_{2}\right] \subset D$ and using the hypothesis (\ref{sufficient
condition for univalency}) we obtain%
\begin{eqnarray*}
\left\vert g\left( z_{2}\right) -g\left( z_{1}\right) \right\vert
&=&\left\vert \left( f\left( z_{2}\right) -g\left( z_{2}\right) \right)
-\left( f\left( z_{1}\right) -g\left( z_{1}\right) \right) \right\vert  \\
&=&\left\vert \int_{\left[ z_{1},z_{2}\right] }f^{\prime }\left( z\right)
-g^{\prime }\left( z\right) dz\right\vert  \\
&\leq &\int_{\left[ z_{1},z_{2}\right] }\left\vert f^{\prime }\left(
z\right) -g^{\prime }\left( z\right) \right\vert \left\vert dz\right\vert  \\
&\leq &\int_{\left[ z_{1},z_{2}\right] }K\left( g,D\right) \left\vert
dz\right\vert  \\
&=&K\left( g,D\right) \left\vert z_{1}-z_{2}\right\vert .
\end{eqnarray*}

Since the points $z_{1,2}$ are assumed to be distinct, from the definition
of the constant $K\left( g,D\right) $ we obtain equivalently%
\begin{equation}
\left\vert \frac{g\left( z_{2}\right) -g\left( z_{1}\right) }{z_{2}-z_{1}}%
\right\vert \leq K\left( g,D\right) =\inf_{\substack{ a,b\in D \\ a\neq b}}%
\left\vert \frac{g\left( a\right) -g\left( b\right) }{a-b}\right\vert \leq
\left\vert \frac{g\left( z_{2}\right) -g\left( z_{1}\right) }{z_{2}-z_{1}}%
\right\vert ,
\end{equation}%
and therefore%
\begin{equation}
K\left( g,D\right) =\inf_{\substack{ a,b\in D \\ a\neq b}}\left\vert \frac{%
g\left( a\right) -g\left( b\right) }{a-b}\right\vert =\left\vert \frac{%
g\left( z_{2}\right) -g\left( z_{1}\right) }{z_{2}-z_{1}}\right\vert .
\label{minimum attained}
\end{equation}

Consider now the auxiliary function $G:D-\left\{ z_{2}\right\} \rightarrow
\mathbb{C}$ defined by
\begin{equation}
G\left( z\right) =\frac{g\left( z\right) -g\left( z_{2}\right) }{z-z_{2}}%
,\qquad z\in D-\left\{ z_{2}\right\} ,
\end{equation}%
and note that since $g$ is analytic in $D$, $G$ is also analytic in $%
D-\left\{ z_{2}\right\} $ and moreover the limit
\begin{equation}
\lim_{z\rightarrow z_{2}}G\left( z\right) =\lim_{z\rightarrow z_{2}}\frac{%
g\left( z\right) -g\left( z_{2}\right) }{z-z_{2}}=g^{\prime }\left(
z_{2}\right)
\end{equation}%
exists and it is finite. The function $G$ can be therefore extended by
continuity to an analytic function in $D$, denoted also by $G$.

Since%
\begin{equation*}
\inf_{z\in D}\left\vert G\left( z\right) \right\vert =\inf_{\substack{ z\in D
\\ z\neq z_{2}}}\left\vert G\left( z\right) \right\vert =\inf_{\substack{ %
z\in D \\ z\neq z_{2}}}\left\vert \frac{g\left( z\right) -g\left(
z_{2}\right) }{z-z_{2}}\right\vert \geq \inf_{\substack{ a,b\in D \\
a\neq b }}\left\vert \frac{g\left( a\right) -g\left( b\right)
}{a-b}\right\vert =K\left( g,D\right) ,
\end{equation*}%
combining with (\ref{minimum attained}) we obtain that
\begin{equation*}
\inf_{z\in D}\left\vert G\left( z\right) \right\vert \geq K\left( g,D\right)
=\left\vert \frac{g\left( z_{2}\right) -g\left( z_{1}\right) }{z_{2}-z_{1}}%
\right\vert =\left\vert G\left( z_{1}\right) \right\vert \geq \inf_{z\in
D}\left\vert G\left( z\right) \right\vert ,
\end{equation*}%
which shows that minimum value of the modulus of $G$ in $D$ is attained at $%
z_{1}$:%
\begin{equation*}
\inf_{z\in D}\left\vert G\left( z\right) \right\vert =\left\vert G\left(
z_{1}\right) \right\vert .
\end{equation*}

However, since the function $g$ is univalent in $D$, from the definition of $%
G$ it follows that $G\left( z\right) \neq 0$ for any $z\in D-\left\{
z_{2}\right\} $, and also $G\left( z_{2}\right) =g^{\prime }\left(
z_{2}\right) \neq 0$, and therefore the function $G$ does not vanish in $D$.
Applying the maximum modulus principle to the analytic function $1/G$ it
follows that $\left\vert G\right\vert $ must be constant in $D$, and
therefore $G$ is constant in $D$.

It follows that
\begin{equation}
g\left( z\right) =g\left( z_{2}\right) +c\left( z-z_{2}\right) ,\qquad z\in
D,  \label{g must be linear}
\end{equation}%
for a certain constant $c\in \mathbb{C}$ (from the definition of $G$ it can
be seen that the constant $c$ can be written in the form $c=g^{\prime
}\left( z_{2}\right) e^{i\theta }$, for some $\theta \in \mathbb{R}$).

The relation (\ref{g must be linear}) shows that $g$ is a linear function,
and therefore the constant $K\left( g,D\right) $ becomes in this case%
\begin{eqnarray*}
K\left( g,D\right)  &=&\inf_{\substack{ a,b\in D \\ a\neq b}}\left\vert
\frac{g\left( a\right) -g\left( b\right) }{a-b}\right\vert  \\
&=&\inf_{\substack{ a,b\in D \\ a\neq b}}\left\vert \frac{\left( g\left(
z_{2}\right) +c\left( a-z_{2}\right) \right) -\left( g\left( z_{2}\right)
+c\left( b-z_{2}\right) \right) }{a-b}\right\vert  \\
&=&\inf_{\substack{ a,b\in D \\ a\neq b}}\left\vert \frac{c\left( a-b\right)
}{a-b}\right\vert  \\
&=&\left\vert c\right\vert .
\end{eqnarray*}

The hypothesis (\ref{sufficient condition for univalency}) of the theorem
can be written therefore as follows%
\begin{equation*}
\left\vert f^{\prime }\left( z\right) -c\right\vert \leq \left\vert
c\right\vert ,\qquad z\in D,
\end{equation*}%
which shows that either $f$ is linear in $D$ (and thus univalent, since $f$
is assumed to be non-constant in $D$), or the following strict inequality
holds%
\begin{equation*}
\left\vert f^{\prime }\left( z\right) -c\right\vert <\left\vert c\right\vert
,\qquad z\in D.
\end{equation*}

Repeating the proof above with $g\left( z\right) \equiv cz$ we obtain%
\begin{eqnarray*}
\left\vert cz_{2}-cz_{1}\right\vert &=&\left\vert \left( f\left(
z_{2}\right) -cz_{2}\right) -\left( f\left( z_{1}\right)
-cz_{1}\right) \right\vert \\
&=&\left\vert \int_{\left[ z_{1},z_{2}\right] }f^{\prime }\left(
z\right) -cdz\right\vert \\
& \leq & \int_{\left[ z_{1},z_{2}\right] }\left\vert f^{\prime
}\left( z\right) -c\right\vert \left\vert dz\right\vert
\\
&<& \left\vert c\right\vert \left\vert z_{2}-z_{1}\right\vert ,
\end{eqnarray*}%
a contradiction.

The contradiction obtained shows that the function $f$ is univalent in $D$,
concluding the proof of the theorem.
\end{proof}

In the particular case $D=U$, from the previous thereom we obtain
immediately the following sufficient criterion for univalence in the unit
disk:

\begin{theorem}
\label{main theorem 2}Let $f:U\rightarrow \mathbb{C}$ be a non-constant
analytic function in the unit disk. If there exists an analytic function $%
g:U\rightarrow \mathbb{C}$ univalent in $U$ such that%
\begin{equation}
\left\vert f^{\prime }\left( z\right) -g^{\prime }\left( z\right)
\right\vert \leq K\left( g,U\right) ,\qquad z\in U,
\label{sufficient condition for univalency 2}
\end{equation}%
then the function $f$ is also univalent in $U$.
\end{theorem}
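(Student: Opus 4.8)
The plan is to recognize this statement as a direct specialization of Theorem \ref{main theorem} to the particular domain $D=U$, so that almost no independent work is required. The only thing I need to check is that the unit disk qualifies as an admissible domain in the hypothesis of Theorem \ref{main theorem}, namely that it is convex. Since $U=U_{1}=\left\{ z\in \mathbb{C}:\left\vert z\right\vert <1\right\}$ is an open disk, it is a convex domain, and so the convexity assumption is automatically satisfied.

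Having verified convexity, I would simply apply Theorem \ref{main theorem} with $D=U$. Given a non-constant analytic $f:U\rightarrow \mathbb{C}$ and a univalent analytic $g:U\rightarrow \mathbb{C}$ satisfying the hypothesis (\ref{sufficient condition for univalency 2}), the estimate
\begin{equation*}
\left\vert f^{\prime }\left( z\right) -g^{\prime }\left( z\right) \right\vert \leq K\left( g,U\right) ,\qquad z\in U,
\end{equation*}
is precisely the hypothesis (\ref{sufficient condition for univalency}) of Theorem \ref{main theorem} read off for the choice $D=U$. Theorem \ref{main theorem} then yields that $f$ is univalent in $U$, which is the desired conclusion.

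Since the argument is a pure instantiation, I do not expect any genuine obstacle: the integration-along-segments computation, the passage to the auxiliary quotient $G$, and the maximum-modulus step that force $g$ to be linear have all been carried out once and for all in the proof of Theorem \ref{main theorem}, and none of them uses anything about the domain beyond convexity. The mildly delicate point worth flagging is only the verification that the constant $K\left( g,U\right)$ appearing here is the same object as the one controlled in the general theorem, which is immediate from its definition since $K\left( g,D\right)$ with $D=U$ is literally $K\left( g,U\right)$. Consequently the proof reduces to the single sentence that $U$ is convex and Theorem \ref{main theorem} applies.
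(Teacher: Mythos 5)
Your proposal is correct and matches the paper exactly: the paper offers no separate proof of Theorem \ref{main theorem 2}, stating only that it follows immediately from Theorem \ref{main theorem} in the particular case $D=U$, which is precisely your instantiation argument using the convexity of the unit disk.
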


As a corollary of Theorem \ref{main theorem} we obtain immediately
the following:

\begin{corollary}
\label{corollary of main theorem 2}If $f:D\rightarrow \mathbb{C}$ is
non-constant and analytic in the convex domain $D$ and there exists $c>0$ such that%
\begin{equation}
\left\vert f^{\prime }\left( z\right) -c\right\vert \leq c,\qquad
z\in D, \label{Noshiro-Warschawski-Wolff type condition}
\end{equation}%
then $f$ is univalent in $D$.
\end{corollary}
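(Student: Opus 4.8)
The plan is to apply Theorem \ref{main theorem} directly, with the univalent comparison function chosen to be the linear function $g\left( z\right) =cz$. First I would check that this $g$ satisfies the hypotheses of the main theorem: since $c>0$, the map $g\left( z\right) =cz$ is analytic and injective, hence univalent, on any domain, and in particular on the convex domain $D$ at hand.

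Next I would compute the constant $K\left( g,D\right) $ for this choice. Since $g$ is linear we have $g\left( a\right) -g\left( b\right) =c\left( a-b\right) $, so the difference quotient equals $c$ for all distinct $a,b\in D$, and hence
\[
K\left( g,D\right) =\inf_{\substack{ a,b\in D \\ a\neq b}}\left\vert \frac{c\left( a-b\right) }{a-b}\right\vert =\left\vert c\right\vert =c.
\]
This is precisely the computation already carried out near the end of the proof of Theorem \ref{main theorem}. With this value in hand, the hypothesis (\ref{Noshiro-Warschawski-Wolff type condition}) of the corollary, namely $\left\vert f^{\prime }\left( z\right) -c\right\vert \leq c$, becomes exactly $\left\vert f^{\prime }\left( z\right) -g^{\prime }\left( z\right) \right\vert \leq K\left( g,D\right) $, which is condition (\ref{sufficient condition for univalency}) of Theorem \ref{main theorem}.

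Having matched the hypotheses, I would then invoke Theorem \ref{main theorem}: since $f$ is non-constant and analytic on the convex domain $D$, and there exists a univalent function $g$ (namely $g\left( z\right) =cz$) satisfying $\left\vert f^{\prime }\left( z\right) -g^{\prime }\left( z\right) \right\vert \leq K\left( g,D\right) $ throughout $D$, the conclusion that $f$ is univalent in $D$ follows at once.

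Since the argument is a direct specialization, there is essentially no obstacle: the only step requiring any verification is the identification $K\left( cz,D\right) =c$, which is immediate from the definition of the constant $K$. The entire content of the corollary lies in recognizing that the linear choice of $g$ collapses the general perturbation bound of Theorem \ref{main theorem} into the concrete inequality (\ref{Noshiro-Warschawski-Wolff type condition}).
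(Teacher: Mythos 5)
Your proposal is correct and follows exactly the paper's own argument: take $g\left( z\right) =cz$, compute $K\left( g,D\right) =c$ directly from the definition, and then apply Theorem \ref{main theorem}. Nothing is missing, and no further comment is needed.
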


\begin{proof}
Considering the univalent function $g:D\rightarrow \mathbb{C}$
defined by $g\left( z\right) =cz$, we have $g^{\prime }\left(
z\right) =c$
for $z\in D$ and%
\begin{equation*}
K\left( g,D\right) =\inf_{\substack{ a,b\in D \\ a\neq b}}\left\vert
\frac{g\left( a\right) -g\left( b\right) }{a-b}\right\vert =\inf_{\substack{ %
a,b\in D \\ a\neq b}}\left\vert \frac{ca-cb}{a-b}\right\vert =c,
\end{equation*}%
and therefore the claim follows from Theorem \ref{main theorem}
above.
\end{proof}

\begin{remark}
Let us note that the previous corollary can also be obtained as a direct
consequence of the classical Noshiro-Warschawski-Wolff univalence criterion,
since the hypothesis (\ref{Noshiro-Warschawski-Wolff type condition})
implies the hypothesis%
\begin{equation}
\func{Re}f^{\prime }\left( z\right) >0,\qquad z\in D.
\label{Noshiro-Warschawski-Wolff hypothesis}
\end{equation}%
of this theorem (the fact that the above inequality is a strict
inequality follows from the maximum principle, the function $f$
being assumed to be non-constant in $D$).

Conversely, the Noshiro-Warschawski-Wolff univalence criterion
follows from the previous corollary. To see this, note that in order
to prove the univalence of $f$, it suffices to prove the univalence
of $f$ in $D_{r}=r D$, for an arbitrarily fixed $r\in (0,1)$.

If the condition (\ref{Noshiro-Warschawski-Wolff hypothesis}) holds,
there exists $c>0$ such that $$f^{\prime }\left( D_{r}\right)
\subset \left\{ w\in
\mathbb{C}:\left\vert w-c\right\vert <c\right\} ,$$ or equivalent%
\begin{equation*}
\left\vert f^{\prime }\left( z\right) -c \right\vert <c,\qquad z\in
D_{r}.
\end{equation*}%

Applying Corollary \ref{corollary of main theorem 2} to the
restriction of of $f$ to $D_r$, it follows that the function $f$ is
univalent in $D_{r}.$ Since $r\in \left( 0,1\right) $ was
arbitrarily fixed, it follows that $f$ is univalent in $U$,
concluding the proof of the claim.
\end{remark}

The remark above shows that Corollary \ref{corollary of main theorem
2} and the Noshiro-Warschawski-Wolff univalence criterion are
equivalent, and therefore Theorem \ref{main theorem} is a
generalization of it. The Noshiro-Warschawski-Wolff univalence
criterion can be viewed as a particular case of the main Theorem
\ref{main theorem}, corresponding to the choice of a linear function
$g$.

\begin{remark}
\label{Nbds of univalent functions}Fixing an arbitrarily univalent function $%
g:U\rightarrow \mathbb{C}$ for which $K\left( g,U\right) \neq 0$ (see Remark %
\ref{K might be 0} above), Theorem \ref{main theorem 2} shows that a whole
neighborhood $V\left( g\right) =\left\{ f\in \mathcal{A}:\left\vert
\left\vert f^{\prime }-g^{\prime }\right\vert \right\vert \leq K\left(
g,U\right) \right\} $ of $g$ consists entirely of univalent functions in $U$
($\left\vert \left\vert \cdot \right\vert \right\vert $ denotes here the
supremum norm in the space $\mathcal{A}_{0}=\left\{ f\in \mathcal{A}:f\left(
0\right) =0\right\} $ of normalized analytic functions). Loosely stated,
Theorem \ref{main theorem 2} shows that an univalent function has a
neighborhood consisting entirely of univalent functions.
\end{remark}

The hypotheses of Theorem \ref{main theorem} and Theorem \ref{main theorem 2}
are sharp, in the sense that we cannot replace the right side of the
inequalities (\ref{sufficient condition for univalency}), respectively (\ref%
{sufficient condition for univalency 2}), by larger constants, as can be
seen from the following example.

\begin{example}
\label{Exemplul 1}Consider the function $f:U\rightarrow \mathbb{C}$ defined
by $f\left( z\right) =z+az^{2}$, $z\in U$, where $a\in \mathbb{C}$ is a
parameter.

Using Theorem \ref{main theorem 2} above with $g\left( z\right) \equiv z$,
for which $K\left( g,U\right) =1$, we obtain that the function $f$ is
univalent in $U$ if
\begin{equation*}
\left\vert 2az\right\vert \le 1,\qquad z\in U,
\end{equation*}%
that is if $\left\vert 2a\right\vert \leq 1$.

This result is sharp, since the function $f$ is univalent iff $\left\vert
a\right\vert \leq \frac{1}{2}$, as it can be checked by direct computation.
\end{example}

The univalence of the function $f$ in the previous example can also be
obtained by using the Noshiro-Warschawski-Wolff univalence criterion (for $%
\left\vert a\right\vert \leq 1/2$ we have $\func{Re}f^{\prime }\left(
z\right) >0$ for any $z\in U$). The next example shows that we may still use
Theorem \ref{main theorem 2} also in situations when the
Noshiro-Warschawski-Wolff univalence criterion cannot be applied:

\begin{example}
\label{Exemplul 2}Consider the linear map $g:U\rightarrow \mathbb{C}$
defined by $g\left( z\right) =\frac{z}{1-z}$. The function $g$ is univalent
in $U$ and we have%
\begin{equation*}
K\left( g,U\right) =\inf_{\substack{ a,b\in U \\ a\neq b}}\left\vert \frac{%
g\left( a\right) -g\left( b\right) }{a-b}\right\vert =\inf_{\substack{ %
a,b\in U \\ a\neq b}}\left\vert \frac{\frac{a}{1-a}-\frac{b}{1-b}}{a-b}%
\right\vert =\inf_{\substack{ a,b\in U \\ a\neq b}}\frac{1}{\left\vert
1-a\right\vert \left\vert 1-b\right\vert }=1.
\end{equation*}

The function $f:U\rightarrow \mathbb{C}$ defined by $f\left( z\right) =\frac{%
z^{2}}{1-z}$ is analytic in $U$ and satisfies%
\begin{equation*}
\left\vert f^{\prime }\left( z\right) -g^{\prime }\left( z\right)
\right\vert =1\leq K\left( g,U\right) ,\qquad z\in U,
\end{equation*}%
and therefore by Theorem \ref{main theorem 2} it follows that $f$ is
univalent in the unit disk.

The univalence of $f$ does not follow however by the
Noshiro-Warschawski-Wolff univalence criterion since
$\func{Re}f^{\prime }\left( z\right) $ takes (arbitrarily small)
negative values for $z\in U$ sufficiently close to $1$.
\end{example}

As another application of Theorem \ref{main theorem 2}, in the next result
we show that by perturbing the coefficients of the Taylor series of an
univalent function, the resulting function is also univalent. More
precisely, we have the following:

\begin{theorem}
\label{Application to Taylor series} Let $g:U\rightarrow \mathbb{C}$
be an analytic univalent function with
Taylor series representation%
\begin{equation}
g\left( z\right) =\sum_{n=0}^{\infty }b_{n}z^{n},\qquad z\in U\text{.}
\label{Taylor series for g}
\end{equation}

If the coefficients $a_{0},a_{1},\ldots \in \mathbb{C}$ satisfy the
inequality
\begin{equation}
\sum_{n=1}^{\infty }n\left\vert a_{n}-b_{n}\right\vert <K\left( g,U\right)
\label{hypothesis on coefficients of Taylor series}
\end{equation}%
then the function $f:U\rightarrow \mathbb{C}$ defined by
\begin{equation}
f\left( z\right) =\sum_{n=0}^{\infty }a_{n}z^{n},\qquad z\in U,
\label{Taylor series for f}
\end{equation}%
is analytic and univalent in $U$.
\end{theorem}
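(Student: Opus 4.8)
The plan is to verify the hypotheses of Theorem \ref{main theorem 2} for the pair $(f,g)$ and then invoke that theorem directly. Observe first that the right-hand side of \eqref{hypothesis on coefficients of Taylor series} must dominate the nonnegative quantity on the left, so the hypothesis already forces $K\left( g,U\right) >0$; thus $g$ is an admissible univalent function with positive constant.

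First I would establish that $f$ is analytic in $U$. Since $g$ is analytic in $U$, its Taylor series has radius of convergence at least $1$, so $\limsup_{n}\left\vert b_{n}\right\vert ^{1/n}\le 1$. Convergence of $\sum_{n\ge 1}n\left\vert a_{n}-b_{n}\right\vert$ forces its terms to tend to $0$, hence $\left\vert a_{n}-b_{n}\right\vert \le C/n$ for some constant $C$, giving $\limsup_{n}\left\vert a_{n}-b_{n}\right\vert ^{1/n}\le 1$ and therefore $\limsup_{n}\left\vert a_{n}\right\vert ^{1/n}\le 1$ as well. Thus the series \eqref{Taylor series for f} has radius of convergence at least $1$, so $f$ is analytic in $U$ and may be differentiated term by term there.

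Next I would produce the derivative estimate required by \eqref{sufficient condition for univalency 2}. Term-by-term differentiation gives $f^{\prime }\left( z\right) -g^{\prime }\left( z\right) =\sum_{n=1}^{\infty }n\left( a_{n}-b_{n}\right) z^{n-1}$, and for $z\in U$ the triangle inequality together with $\left\vert z\right\vert ^{n-1}\le 1$ yields
\[
\left\vert f^{\prime }\left( z\right) -g^{\prime }\left( z\right) \right\vert \le \sum_{n=1}^{\infty }n\left\vert a_{n}-b_{n}\right\vert \left\vert z\right\vert ^{n-1}\le \sum_{n=1}^{\infty }n\left\vert a_{n}-b_{n}\right\vert <K\left( g,U\right) .
\]
(The Weierstrass $M$-test with $M_{n}=n\left\vert a_{n}-b_{n}\right\vert$ reconfirms the uniform convergence, hence analyticity, of $f^{\prime }-g^{\prime }$ on $U$.) In particular the bound $\left\vert f^{\prime }\left( z\right) -g^{\prime }\left( z\right) \right\vert \le K\left( g,U\right)$ demanded in \eqref{sufficient condition for univalency 2} holds throughout $U$.

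Finally I would check that $f$ is non-constant, which I expect to be the one point needing a genuine argument rather than a direct estimate. Letting $a,b\to 0$ in the definition of $K$ gives $K\left( g,U\right) \le \left\vert g^{\prime }\left( 0\right) \right\vert =\left\vert b_{1}\right\vert$. Were $f$ constant, every $a_{n}$ with $n\ge 1$ would vanish, so the left-hand side of \eqref{hypothesis on coefficients of Taylor series} would equal $\sum_{n\ge 1}n\left\vert b_{n}\right\vert \ge \left\vert b_{1}\right\vert \ge K\left( g,U\right)$, contradicting the strict inequality. Hence $f$ is non-constant, and all hypotheses of Theorem \ref{main theorem 2} are met, so $f$ is univalent in $U$. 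The remaining steps (the radius-of-convergence bookkeeping and the derivative bound) are routine; only the non-constancy verification requires the small observation $K\left( g,U\right) \le \left\vert b_{1}\right\vert$.
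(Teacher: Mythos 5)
Your proof is correct and follows essentially the same route as the paper's own: establish that the series \eqref{Taylor series for f} has radius of convergence at least $1$, bound $\left\vert f^{\prime }\left( z\right) -g^{\prime }\left( z\right) \right\vert$ termwise by $\sum_{n\geq 1}n\left\vert a_{n}-b_{n}\right\vert <K\left( g,U\right)$, and invoke Theorem \ref{main theorem 2}. The one genuine difference is that you verify the non-constancy hypothesis of Theorem \ref{main theorem 2} (via the observation $K\left( g,U\right) \leq \left\vert g^{\prime }\left( 0\right) \right\vert =\left\vert b_{1}\right\vert$), a condition the paper's proof never checks; moreover, your analyticity argument, which treats $f=g+\left( f-g\right)$ as a sum of two series each of radius at least $1$ using $\left\vert a_{n}-b_{n}\right\vert \leq C/n$, avoids the paper's unjustified intermediate claim that $\left\vert b_{n}\right\vert \leq 1$ for all large $n$ (which does not follow from $\limsup \left\vert b_{n}\right\vert ^{1/n}\leq 1$ and can fail for univalent $g$, e.g.\ the Koebe function $z/\left( 1-z\right) ^{2}=\sum_{n\geq 1}nz^{n}$), so your write-up is, if anything, the more careful of the two.
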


\begin{proof}
Since $g$ is univalent in $U$, the radius of convergence of the Taylor
series (\ref{Taylor series for g}) is at least $1$, hence
\begin{equation*}
\lim \sup \sqrt[n]{\left\vert b_{n}\right\vert }\leq 1,
\end{equation*}%
and therefore $\left\vert b_{n}\right\vert \leq 1$ for all $n$ sufficiently
large.

Using the hypothesis (\ref{hypothesis on coefficients of Taylor series}) we
obtain%
\begin{equation*}
\lim \sup \sqrt[n]{\left\vert a_{n}\right\vert }\leq \lim \sup \sqrt[n]{%
\left\vert b_{n}\right\vert +\left\vert a_{n}-b_{n}\right\vert }\leq \lim
\sup \sqrt[n]{1+\frac{K\left( g,U\right) }{n}}=1,
\end{equation*}%
and therefore the radius of convergence of the series in (\ref{Taylor series
for f}) is at least $1$, thus the function $f$ is well defined by (\ref%
{Taylor series for f}) and it is analytic in $U$.

Since
\begin{eqnarray*}
\left\vert f^{\prime }\left( z\right) -g^{\prime }\left( z\right)
\right\vert &=&\left\vert \sum_{n=0}^{\infty
}na_{n}z^{n-1}-\sum_{n=0}^{\infty }nb_{n}z^{n-1}\right\vert \\
&\leq &\sum_{n=1}^{\infty }n\left\vert a_{n}-b_{n}\right\vert \left\vert
z\right\vert ^{n-1} \\
&\leq &\sum_{n=1}^{\infty }n\left\vert a_{n}-b_{n}\right\vert \\
&<&K\left( g,U\right) ,
\end{eqnarray*}%
for any $z\in U$, by Theorem \ref{main theorem 2} follows that $f$ is
univalent in $U$, concluding the proof.
\end{proof}

Using a comparison with the generalized harmonic series, from the
above we can obtain the following:

\begin{corollary}
Let $g:U\rightarrow \mathbb{C}$ be an analytic univalent function with
Taylor series representation%
\begin{equation}
g\left( z\right) =\sum_{n=0}^{\infty }b_{n}z^{n},\qquad z\in U\text{.}
\end{equation}

If the coefficients $a_{0},a_{1},\ldots \in \mathbb{C}$ satisfy the
inequality
\begin{equation}
\left\vert a_{n}-b_{n}\right\vert <K\left( g,U\right) \frac{\zeta \left(
p\right) }{n^{p+1}},\qquad n=1,2,\ldots ,
\end{equation}%
for some $p>1$ ($\zeta $ denotes the Riemann zeta function), then the
function $f:U\rightarrow \mathbb{C}$ defined by
\begin{equation}
f\left( z\right) =\sum_{n=0}^{\infty }a_{n}z^{n},\qquad z\in U,
\end{equation}%
is analytic and univalent in $U$.
\end{corollary}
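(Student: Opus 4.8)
The plan is to obtain this corollary as an immediate consequence of Theorem \ref{Application to Taylor series}; the only thing to check is that the pointwise bound on the coefficients forces the summability condition (\ref{hypothesis on coefficients of Taylor series}), namely $\sum_{n=1}^{\infty }n\left\vert a_{n}-b_{n}\right\vert <K\left( g,U\right) $. Once this is established, both conclusions we want (analyticity of $f$ in $U$ and its univalence) follow verbatim from that theorem, since its proof already extracts the radius-of-convergence estimate and the univalence from exactly this single inequality.

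To verify the summability condition I would multiply the hypothesis by $n$ and sum over $n\geq 1$. The factor $n$ lowers the exponent $n^{p+1}$ to $n^{p}$, so that $n\left\vert a_{n}-b_{n}\right\vert $ is majorized by a fixed multiple of $n^{-p}$; summing then produces the generalized harmonic series $\sum_{n=1}^{\infty }n^{-p}$, which converges to $\zeta \left( p\right) $ precisely because $p>1$. The normalizing factor $\zeta \left( p\right) $ built into the hypothesis is chosen exactly so that this constant multiple of $\zeta \left( p\right) $ collapses to $K\left( g,U\right) $, yielding the desired bound on the weighted coefficient sum.

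The one delicate point is strictness: Theorem \ref{Application to Taylor series} requires the \emph{strict} inequality $\sum_{n=1}^{\infty }n\left\vert a_{n}-b_{n}\right\vert <K\left( g,U\right) $, whereas termwise strict inequalities need not pass to a strict inequality under infinite summation. I would handle this by introducing the deficit $\varepsilon _{n}=d_{n}-n\left\vert a_{n}-b_{n}\right\vert $, where $d_{n}$ denotes the $n$-th majorant term (so that $\sum_{n}d_{n}=K\left( g,U\right) $). Each $\varepsilon _{n}$ is strictly positive by the hypothesis, hence the convergent series $\sum_{n}\varepsilon _{n}$ is bounded below by $\varepsilon _{1}>0$, and therefore $\sum_{n}n\left\vert a_{n}-b_{n}\right\vert =K\left( g,U\right) -\sum_{n}\varepsilon _{n}<K\left( g,U\right) $. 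With the strict inequality secured, Theorem \ref{Application to Taylor series} applies directly and finishes the argument. I expect this strictness bookkeeping, rather than the convergence estimate itself, to be the only genuine subtlety; the remainder is the routine harmonic-series comparison.
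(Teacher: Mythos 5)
Your overall strategy---reducing the corollary to Theorem \ref{Application to Taylor series} by verifying its hypothesis (\ref{hypothesis on coefficients of Taylor series})---is exactly the reduction the paper intends (the paper offers no proof beyond the phrase ``comparison with the generalized harmonic series''), and your bookkeeping for strictness of the inequality is sound as far as it goes. The problem is that the one step you dismiss as routine is precisely where the argument breaks, and you never actually carry out the computation. You assert that ``the normalizing factor $\zeta(p)$ built into the hypothesis is chosen exactly so that this constant multiple of $\zeta(p)$ collapses to $K(g,U)$,'' but it is not so chosen. Multiplying the stated hypothesis by $n$ gives $n\left\vert a_{n}-b_{n}\right\vert <K\left( g,U\right) \zeta \left( p\right) n^{-p}$, and summing yields
\begin{equation*}
\sum_{n=1}^{\infty }n\left\vert a_{n}-b_{n}\right\vert <K\left( g,U\right)
\zeta \left( p\right) \sum_{n=1}^{\infty }\frac{1}{n^{p}}=K\left( g,U\right)
\zeta \left( p\right) ^{2},
\end{equation*}
which, since $\zeta \left( p\right) >1$ for every $p>1$, is a bound \emph{strictly larger} than $K\left( g,U\right) $. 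The hypothesis as stated therefore permits coefficient sequences with $\sum_{n=1}^{\infty }n\left\vert a_{n}-b_{n}\right\vert >K\left( g,U\right) $, so condition (\ref{hypothesis on coefficients of Taylor series}) does not follow and the reduction does not go through.

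What the computation actually forces is that $\zeta \left( p\right) $ belong in the \emph{denominator}: if the hypothesis reads $\left\vert a_{n}-b_{n}\right\vert <K\left( g,U\right) /\left( \zeta \left( p\right) n^{p+1}\right) $, then summing gives $\sum_{n=1}^{\infty }n\left\vert a_{n}-b_{n}\right\vert \leq K\left( g,U\right) $, and your deficit argument (which is a legitimate concern, though it suffices to note the positive gap in the first term alone) upgrades this to the strict inequality the theorem requires. The statement as printed---and the example following it in the paper, which uses the bound $\pi ^{2}/\left( 6n^{3}\right) $ with $K\left( g,U\right) =1$, giving $\sum_{n}n\left\vert a_{n}-1\right\vert $ up to $\left( \pi ^{2}/6\right) ^{2}\approx 2.7>1$---carries this same misplacement of $\zeta \left( p\right) $. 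By taking the normalization on faith rather than doing the one-line sum you describe, your proposal inherits the error instead of detecting it; a correct write-up must either carry out the sum and conclude that the constant in the statement should be $K\left( g,U\right) /\left( \zeta \left( p\right) n^{p+1}\right) $, or else supply an entirely different argument for the statement as literally written (and no such argument via Theorem \ref{Application to Taylor series} is available).
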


\begin{example}
Considering the function $g\left( z\right) =\frac{z}{1-z}=\sum_{n=1}^{\infty
}z^{n}$ defined in Example \ref{Exemplul 2}, which is analytic and univalent
in $U$ and has $K\left( g,U\right) =1$, from the previous theorem it follows
that the function $f:U\rightarrow \mathbb{C}$ defined by $f\left( z\right)
=\sum_{n=0}^{\infty }a_{n}z^{n}$ is analytic and univalent in $U$ if the
coefficients $a_{n}$ satisfy the inequality%
\begin{equation*}
\sum_{n=1}^{\infty }n\left\vert a_{n}-1\right\vert <1.
\end{equation*}

Using for example the fact that $\zeta \left( 2\right) =\frac{\pi ^{2}}{6}%
\approx 1.645$, from the previous corollary it follows that the function $f$
is also analytic and univalent in $U$ if the coefficients $a_{n}$ satisfy
the inequality%
\begin{equation*}
\left\vert a_{n}-1\right\vert \leq \frac{\pi ^{2}}{6n^{3}}\approx \frac{1.645%
}{n^{3}},\qquad n=1,2,\ldots
\end{equation*}
\end{example}

\end{document}